\def\bg{\bigg}
\def\({\bg(}
\def\){\bg)}
\def\f{\frac}
\def\mo{{\rm{mod}\ }}
\def\eq{\equiv}
\def\<{\langle}
\def\>{\rangle}
\def\1{{\bf 1}}
\theoremstyle{plain}
\newtheorem{theorem}{Theorem}
\newtheorem{lemma}{Lemma}
\theoremstyle{definition}
\theoremstyle{remark}
\numberwithin{equation}{section}
\begin{document}
\hbox{Proc. Amer. Math. Soc. 151 (2023), no.\,8, 3305--3315.}
\medskip

\title[On congruences involving Ap\'{e}ry numbers]{On congruences involving Ap\'{e}ry numbers}
\author{Wei Xia}
\address[Wei Xia]{Department of Mathematics, Nanjing University, Nanjing 210093, People's Republic of China}
\email{wxia@smail.nju.edu.cn}
\author{Zhi-Wei Sun}
\address[Zhi-Wei Sun, corresponding author]{Department of Mathematics, Nanjing University, Nanjing 210093, People's Republic of China}
\email{zwsun@nju.edu.cn}
\begin{abstract}
In this paper, we mainly establish a congruence for a sum involving Ap\'{e}ry numbers, which was conjectured by Z.-W. Sun. Namely, for any prime $p>3$ and positive odd integer $m$, we prove that there is a $p$-adic integer $c_m$ only depending on $m$ such that
$$\sum_{k=0}^{p-1}(2k+1)^{m}(-1)^kA_k\equiv c_mp\left(\frac{p}{3}\right)\pmod{p^3},$$
where $A_k=\sum_{j=0}^{k}\binom{k}{j}^2\binom{k+j}{j}^2$ is the Ap\'{e}ry number and $(\frac{.}{p})$ is the Legendre symbol.
\end{abstract}
\keywords{Ap\'{e}ry numbers, congruences, binomial coefficients.}
\subjclass[2020]{Primary 11B65, 11A07; Secondary 05A10, 11B39}
\thanks{Supported by the National Natural Science Foundation of China (Grant No. 11971222).}
\maketitle

\section{Introduction}
\setcounter{lemma}{0} \setcounter{theorem}{0}
\setcounter{equation}{0}\setcounter{proposition}{0}
\setcounter{Rem}{0}\setcounter{conjecture}{0}
In 1979, Ap\'{e}ry \cite{Ap} proved that $\zeta(3)$ is irrational. During his proof, he introduced the numbers $$A_n=\sum_{k=0}^{n}\binom{n}{k}^2\binom{n+k}{k}^2=\sum_{k=0}^{n}\binom{n+k}{2k}^2\binom{2k}{k}^2\quad(n\in \mathbb{N}),$$
which are known as Ap\'{e}ry numbers. These numbers have many interesting congruence properties, and attracted the attention of many researchers. In 1987, F. Beukers \cite{Be} conjectured that
$$A_{(p-1)/2}\equiv a_p\pmod{p^2}$$
for any odd prime $p$, where $a_n\ (n\in\mathbb{Z}^{+}=\{1,2,3,\ldots\})$ are given by the power series expansion
$$q\prod_{n=1}^{\infty}(1-q^{2n})^4(1-q^{4n})^4=\sum_{n=1}^{\infty}a_nq^n\ \ (|q|<1).$$
Beukers' conjecture was finally confirmed by S. Ahlgren and K. Ono \cite{Ahlgren} in 2000. Recently, J.-C. Liu and C. Wang \cite{LiuWang} determined $A_{p-1}$ modulo $p^5$ for any prime $p>5$, namely,
\begin{equation}\label{equliu}
A_{p-1}\equiv1+p^3\left(\frac{4}{3}B_{p-3}-\frac{1}{2}B_{2p-4}\right)+\frac{1}{9}p^4B_{p-3}\pmod{p^5},
\end{equation}
where $B_0, B_1, B_2,\ldots$ denote the well-known Bernoulli numbers.

In 2012, Z.-W. Sun \cite{Sun12} discovered some new divisibility results for certain sums involving Ap\'{e}ry numbers. In particular, he proved that
$$\sum_{k=0}^{n-1}(2k+1)A_k\equiv0\pmod{n}$$
for all $n\in \mathbb{Z}^{+}$. There are some further studies along this line (see, e.g., \cite{Mao,Pan,Wang,Zhang}). For instance, V.J.W. Guo and J. Zeng \cite{GuoZeng} proved that
$$\sum_{k=0}^{n-1}(2k+1)(-1)^kA_k\equiv 0\pmod{n}$$
for all $n\in \mathbb{Z}^{+}$ and
\begin{equation}\label{equguo}
\sum_{k=0}^{p-1}(2k+1)(-1)^kA_k\equiv p\left(\frac{p}{3}\right)\pmod{p^3}
\end{equation}
for any prime $p>3$, which were both conjectured by Sun \cite{Sun12}.
By \cite[Corollary 2.2]{Sun16}, we also have
\begin{equation}\sum_{k=0}^{p-1}(2k+1)^3(-1)^kA_k\equiv-\f p3\left(\f p3\right)\pmod{p^3}
\end{equation}
for any prime $p>3$.
Motivated by the above work, we obtain the following result conjectured by Sun \cite[Remark 2.4]{Sun16}.
\begin{theorem}\label{theorem1}
For any prime $p>3$ and positive odd integer $m$, there is a $p$-adic integer $c_m$ only depending on $m$ such that
\begin{equation}\label{equthm}
\sum_{k=0}^{p-1}(2k+1)^{m}(-1)^kA_k\equiv c_mp\left(\frac{p}{3}\right)\pmod{p^3}.
\end{equation}
\end{theorem}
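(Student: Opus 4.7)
The plan is induction on the odd integer $m$, with the cases $m=1$ and $m=3$ supplied by \eqref{equguo} and (1.3). Write $S_n:=\sum_{k=0}^{p-1}(2k+1)^n(-1)^k A_k$. The driver of the induction is the known reflection $A_{p-1-k}\equiv A_k\pmod{p^3}$ (valid for all primes $p>3$), together with the substitution $k\mapsto p-1-k$. Since $(-1)^{p-1-k}=(-1)^k$ for odd $p$, this substitution gives $S_n\equiv\sum_{k=0}^{p-1}(2p-1-2k)^n(-1)^k A_k\pmod{p^3}$. Writing $(2p-1-2k)^n=(-1)^n((2k+1)-2p)^n$ and expanding in powers of $p$ up to order $p^2$, I expect to obtain the two reflection formulas
\begin{gather*}
S_m\equiv mp\,S_{m-1}-m(m-1)p^2\,S_{m-2}\pmod{p^3}\quad(m\text{ odd}),\\
S_{n-1}\equiv(n-1)p\,S_{n-2}\pmod{p^2}\quad(n\text{ even},\ p\nmid n).
\end{gather*}

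The induction step at an odd $m\ge 5$ uses only the first formula: the inductive hypothesis at $m-2$ gives $S_{m-2}\equiv 0\pmod p$, so the $p^2$-term is absorbed into $p^3$ and we are left with $S_m\equiv mp\,S_{m-1}\pmod{p^3}$. Thus the theorem at $m$ reduces to the companion claim that $S_{m-1}\equiv c'_{m-1}\bigl(\tfrac{p}{3}\bigr)\pmod{p^2}$ for a $p$-adic integer $c'_{m-1}$ depending only on $m-1$; the constant is then $c_m:=mc'_{m-1}$. The cases $n=0$ and $n=2$ of the companion claim drop out of the first reflection formula applied to $m=1$ and $m=3$: from $S_1\equiv p\bigl(\tfrac{p}{3}\bigr)\pmod{p^3}$ one reads off $S_0\equiv\bigl(\tfrac{p}{3}\bigr)\pmod{p^2}$, and from $S_3\equiv-\tfrac{p}{3}\bigl(\tfrac{p}{3}\bigr)\pmod{p^3}$ one gets $S_2\equiv-\tfrac{1}{9}\bigl(\tfrac{p}{3}\bigr)\pmod{p^2}$.

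The main obstacle is the companion claim for even $n\ge 4$. The second reflection formula determines $S_{n-1}$ (odd) from $S_{n-2}$ (even) modulo $p^2$ but does not pin down $S_n$ itself, so the $k\mapsto p-1-k$ symmetry is exhausted and a new ingredient is required. I would look for such an ingredient in Ap\'ery's three-term recurrence $(n+1)^3A_{n+1}=(2n+1)(17n^2+17n+5)A_n-n^3A_{n-1}$: multiplying it by $(-1)^n(2n+1)^j$ for suitable $j$ and summing $0\le n\le p-1$ via Abel summation, one should obtain a universal polynomial relation among $S_0,S_1,\ldots,S_{j+3}$ modulo $p^3$, the boundary terms at $n=p-1,p$ being controlled via $A_{p-1}\equiv 1\pmod{p^3}$ (a consequence of \eqref{equliu}). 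Combined with the reflection formulas, this extra relation should let the induction determine $S_n$ modulo $p^2$ for every even $n$ as a $\mathbb Q$-linear combination of already-controlled sums. The delicate technical point, where I expect the real work to lie, will be verifying that the resulting $c'_n$ are indeed $p$-adic integers with denominators independent of $p$, so that the constants $c_m=m\,c'_{m-1}$ have the universality demanded by the theorem.
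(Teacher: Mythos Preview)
Your approach rests on the reflection $A_{p-1-k}\equiv A_k\pmod{p^3}$, which you call ``known''. It is not: already for $p=5$ and $k=1$ one has $A_3=1445$ and $A_1=5$, and $A_3-A_1=1440=5\cdot288$ with $5\nmid288$, so the congruence fails even modulo $p^2$. (The symmetry $A_{p-1-k}\equiv A_k$ holds only modulo $p$; the special case $A_{p-1}\equiv A_0\pmod{p^3}$ that you may have in mind is a much deeper fact that does not extend to general $k$.) Consequently your reflection identity $S_m\equiv mp\,S_{m-1}-m(m-1)p^2 S_{m-2}\pmod{p^3}$ is false: for $p=5$ it would force $S_0\equiv\bigl(\tfrac{p}{3}\bigr)=-1\pmod{25}$, whereas in fact $S_0=\sum_{k=0}^{4}(-1)^kA_k=31625\equiv0\pmod{125}$. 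So the inductive engine never starts, and the companion claim you derive for $n=0,2$ is already wrong.

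What survives is your second idea, Abel summation of the Ap\'ery recurrence --- but this has to carry the whole weight, not just patch a gap. That is exactly what the paper does: multiplying the recurrence for $f(k)=(-1)^kA_k$ by $k^{m-3}$ and telescoping produces explicit polynomials $P_m(x)$ of degree $m$ (leading coefficient $36$) with $\sum_{k=0}^{p-1}P_m(k)(-1)^kA_k\equiv0\pmod{p^3}$ for every $m\ge3$. The nontrivial algebraic step, which your sketch does not anticipate, is that $(2x+1)^m$ for odd $m$ lies in the $\mathbb{Q}$-span of $P_3,\ldots,P_m$ and $P_1=2x+1$; this is proved by the substitution $z=2x+1$ and a parity argument showing that the ``remainder'' polynomial of degree $\le2$ is odd in $z$, hence a multiple of $z$. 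The $p$-adic integrality of the coefficients (only powers of $3$ in the denominators) then follows by a triangular induction on the coefficients. Your proposal gestures at the right tool but misses this key decomposition, and the reflection shortcut you rely on to avoid it is invalid.
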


Sun \cite[Remark 2.4]{Sun16} mentioned that he was able to prove \eqref{equthm} for any prime $p>3$
in the cases $m=5,7$ with
$c_5=-13/27$ and $c_7=5/9$.
It is worth noting that there exists a parameter $m$ in  \eqref{equthm} and this is the difficulty of this conjecture. It is unrealistic to check every value of $m$, so our approach is to treat it uniformly. Firstly, we look for a series of polynomials such that the left-hand side of \eqref{equthm} has a closed form if we replace $(2k+1)^m$ by such polynomials. Then we transform $(2k+1)^m$ into a linear combination of such polynomials.  We shall prove Theorem \ref{theorem1} in the next section.

Our second theorem is motivated by Sun's recent work on $\pi$-series \cite{Sun21}.

\begin{theorem}\label{theorem2}
For any $n\in\mathbb{Z}^{+}$, we have
$$\frac{1}{n(n+1)}\sum_{k=1}^{n}(-1)^{n-k}(9k^2+10k+3)k^2A_k\in\{1,3,5,\ldots\}.$$
Moreover, for any odd prime $p$, we have
\begin{equation}\label{equcor}
\sum_{k=1}^{p}(-1)^{k}(9k^2+10k+3)k^2A_k\equiv \frac{p}{3}\left(\frac{p}{3}\right)-15p^2\pmod{p^3}.
\end{equation}
\end{theorem}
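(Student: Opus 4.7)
The strategy is to bridge both parts of Theorem~\ref{theorem2} to alternating $(2k+1)$-power sums $S_m := \sum_{k=0}^{p-1}(-1)^k(2k+1)^m A_k$ via the elementary polynomial identity
$$(3k^2+3k+1)^2 \;=\; k^2(9k^2+10k+3) + (2k+1)^3,$$
which one checks by expanding both sides to $9k^4+18k^3+15k^2+6k+1$.

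For the integrality, positivity, and oddness claim, I plan to establish an explicit closed form for $T(n):=\sum_{k=1}^{n}(-1)^{n-k}(9k^2+10k+3)k^2 A_k$ by creative telescoping. Concretely, I expect polynomials $P, R \in \mathbb{Q}[n]$ such that
$$T(n) \;=\; n(n+1)\bigl(P(n) A_n + R(n) A_{n-1}\bigr),$$
with the parenthesized quantity always a positive odd integer. Candidates for $P$ and $R$ are obtained by numerical interpolation from initial values of $T$, and the identity is then proved inductively in $n$ by invoking the Ap\'ery recurrence $(n+1)^3 A_{n+1} = (2n+1)(17n^2+17n+5) A_n - n^3 A_{n-1}$; the reduction to a polynomial identity in $n$ is mechanical. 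Integrality is then immediate, oddness follows from $A_n \equiv 1 \pmod 2$ (provable inductively from the Ap\'ery recurrence) combined with the mod-$2$ reduction of the closed form, and positivity is a straightforward induction.

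For the congruence, take $n = p$ and split off the $k = p$ term. By Gessel's supercongruence $A_p \equiv A_1 = 5 \pmod{p^3}$ (for $p \ge 5$), this term contributes $-(9p^2+10p+3)p^2 \cdot 5 \equiv -15p^2 \pmod{p^3}$. For the remaining $\sum_{k=0}^{p-1}(-1)^k k^2(9k^2+10k+3) A_k$, the polynomial identity above yields
$$\sum_{k=0}^{p-1}(-1)^k k^2(9k^2+10k+3) A_k \;=\; \sum_{k=0}^{p-1}(-1)^k (3k^2+3k+1)^2 A_k \;-\; S_3,$$
and $S_3 \equiv -\frac{p}{3}\left(\frac{p}{3}\right) \pmod{p^3}$ is the congruence from \cite[Corollary 2.2]{Sun16} recalled in the excerpt. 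The congruence \eqref{equcor} thus reduces to the key claim
$$\sum_{k=0}^{p-1}(-1)^k (3k^2+3k+1)^2 A_k \;\equiv\; 0 \pmod{p^3},$$
which I will prove by producing a closed form for the partial sum $\sum_{k=0}^{n-1}(-1)^k(3k^2+3k+1)^2 A_k$ that manifestly carries a factor of $n^3$ at $n=p$, again via creative telescoping using the Ap\'ery recurrence.

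The main obstacle is this last step: locating and verifying the WZ-type closed form for $\sum_{k=0}^{n-1}(-1)^k(3k^2+3k+1)^2 A_k$. Numerical evidence (the partial sum at $n=5$ equals $125 \cdot 966757$, and at $n=7$ equals $343 \cdot 989738515$) strongly supports the $p^3$-divisibility, but a conceptual proof requires a nontrivial telescoping certificate derived from the Ap\'ery recurrence. The case $p=3$ must be handled by direct computation since Gessel's supercongruence requires $p \ge 5$.
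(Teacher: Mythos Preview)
Your plan for the congruence~\eqref{equcor} is sound and in fact cleaner than the paper's. The closed form you anticipate really does exist: one checks (by Gosper/Abel summation against the Ap\'ery recurrence) that
\[
\sum_{k=0}^{n-1}(-1)^k(3k^2+3k+1)^2 A_k
  \;=\;(-1)^{n+1}\,\frac{n^3}{8}\bigl((2n-1)A_n+(2n+1)A_{n-1}\bigr),
\]
so the $p^3$-divisibility at $n=p$ is manifest, and together with your polynomial identity and the cited value of $S_3$ this finishes part two. The paper instead writes $(9k^2+10k+3)k^2=\tfrac14 P_4(k)+\tfrac{11}{72}P_3(k)+\tfrac13 P_1(k)$ and applies Lemma~\ref{lem} and~\eqref{equguo}; your route avoids the $P_m$-machinery entirely.

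The integrality/oddness part, however, has a genuine gap: the closed form $T(n)=n(n+1)\bigl(P(n)A_n+R(n)A_{n-1}\bigr)$ with $P,R\in\Q[n]$ that you plan to interpolate \emph{does not exist}. Writing $T(n)=\alpha(n)A_n+\beta(n)A_{n-1}$ and imposing the defining recursion $T(n+1)+T(n)=(9(n+1)^2+10(n+1)+3)(n+1)^2A_{n+1}$ together with the Ap\'ery recurrence forces (after clearing denominators) a relation of the shape
\[
(2n+1)(17n^2+17n+5)\gamma(n+1)+(n+1)^3\gamma(n+2)+n^3\gamma(n)=\text{(given polynomial)},
\]
whose left-hand side has leading coefficient $36$ in the top degree; matching degrees shows no polynomial $\gamma$ can absorb the residual degree-$1$ term coming from the $P_1$-component of $(9k^2+10k+3)k^2$. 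Concretely, the paper's identity~\eqref{equtn} exhibits the obstruction: $T_n$ contains an irreducible summand $\tfrac{(-1)^n}{3}\sum_{k=0}^{n-1}(-1)^k(2k+1)A_k$, and this alternating sum is \emph{not} of the form $\text{poly}\cdot A_n+\text{poly}\cdot A_{n-1}$. So numerical interpolation will never stabilise, and the inductive verification you outline cannot get off the ground.

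The paper's proof of the first assertion is accordingly much heavier than you anticipate: it invokes the Guo--Zeng identity~\eqref{GZ-3.3} to extract one factor of $n$ (resp.\ $n+1$) from that problematic sum, and then needs separate $3$-adic and $2$-adic arguments (Lemmas~\ref{lem4} and~\ref{lem23}, involving $A_n+A_{n-1}\pmod 9$ and $A_n-A_{n-1}\pmod{16}$) to show the remaining rational expression is integral and odd. Your mod-$2$ sketch, which relies on the nonexistent closed form, would need to be replaced by something of this kind.
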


\section{Proof of Theorem \ref{theorem1}}

For the sake of convenience, we define
$$P_m(x)=\begin{cases}2x+1&\mbox{if}\ m=1,\\
(x+1)^3x^{m-3}+(34x^3+51x^2+27x+5)(x-1)^{m-3}+x^3(x-2)^{m-3}&\mbox{if}\ m\geq3.\end{cases}$$

\begin{lemma}\label{lem}
Let $p>3$ be a prime and $m\geq3$ an integer. Then we have
$$\sum_{k=0}^{p-1}P_m(k)(-1)^kA_k\equiv0\pmod{p^3}.$$
\end{lemma}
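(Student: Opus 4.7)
The polynomial $P_m(x)$ is engineered to annihilate the alternating sum via the Ap\'{e}ry three-term recurrence
\begin{equation*}
(n+1)^3 A_{n+1} - (2n+1)(17n^2+17n+5) A_n + n^3 A_{n-1} = 0 \qquad (n \geq 0,\ \text{with } A_{-1}:=0),
\end{equation*}
so the plan is to unwind this observation through an Abel-type summation by parts. I would start from the trivial identity
\begin{equation*}
0 = \sum_{k=0}^{p-1}(-1)^k f(k) \Bigl[(k+1)^3 A_{k+1} - (2k+1)(17k^2+17k+5) A_k + k^3 A_{k-1}\Bigr]
\end{equation*}
with the particular choice $f(k) := (k-1)^{m-3}$ (interpreted as the constant $1$ when $m=3$); the motivation for this choice becomes visible one step later.

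Next I would shift indices: substitute $j=k+1$ in the $A_{k+1}$ piece and $j=k-1$ in the $A_{k-1}$ piece, invoking $A_{-1}=0$ to extend cleanly to $j=0$. Both shifted sums become sums over $A_j$ with alternating sign $(-1)^{j-1}=-(-1)^j$, and after regrouping everything into a single sum $\sum_{j=0}^{p-1}(-1)^j A_j[\cdots]$, the bracket at index $j$ equals
\begin{equation*}
(j+1)^3 f(j+1) + (2j+1)(17j^2+17j+5)\,f(j) + j^3 f(j-1).
\end{equation*}
With $f(k)=(k-1)^{m-3}$ this is precisely $P_m(j)$: the three summands match the three summands in the definition of $P_m$, and one checks $(2x+1)(17x^2+17x+5) = 34x^3+51x^2+27x+5$. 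The index shifts also leave boundary contributions at $j=p-1$ and $j=p$, and a short calculation (using $(-1)^p=-1$ since $p$ is odd, so the two surviving boundary terms add rather than cancel) shows they assemble into
\begin{equation*}
p^3 \bigl[(p-2)^{m-3} A_p + (p-1)^{m-3} A_{p-1}\bigr].
\end{equation*}

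Because $A_p,A_{p-1}\in\Z$ and the factors $(p-1)^{m-3},(p-2)^{m-3}$ are integers for every $m\geq 3$, the right-hand side is manifestly a $p^3$-multiple of an integer, and the lemma follows. The $m=3$ base case is absorbed into the same argument via the convention $x^0=1$. I do not anticipate any real obstacle: the content is entirely bookkeeping, and the only delicate point is tracking signs and the two surviving boundary terms through the two index shifts so that the identification of the bracket with $P_m(j)$ comes out exactly, with no leftover error.
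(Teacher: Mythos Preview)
Your argument is correct, and indeed the identity
\[
\sum_{j=0}^{p-1}(-1)^jP_m(j)A_j=p^3\bigl[(p-2)^{m-3}A_p+(p-1)^{m-3}A_{p-1}\bigr]
\]
falls out exactly as you describe; the bookkeeping with the two index shifts and the sign $(-1)^{p-1}=1$ is clean, and the $m=3$ case causes no trouble under the convention $x^0=1$.

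Your route is genuinely simpler than the paper's. The paper also telescopes via the Ap\'ery recurrence, but it multiplies the shifted recurrence by $k^{m-3}$ and sums over $0\le k\le n-2$ before specializing $n=p-1$. That choice lands the boundary at indices $p-2$ and $p-1$, producing
\[
-(p-1)^3(p-2)^{m-3}A_{p-2}+p^3(p-1)^{m-3}A_{p-1}+(34p^3-51p^2+27p-5)(p-2)^{m-3}A_{p-1},
\]
which is \emph{not} visibly a multiple of $p^3$. The paper then has to supply the congruences $A_{p-2}\equiv 5-12p\pmod{p^3}$ (via Wolstenholme) and $A_{p-1}\equiv 1\pmod{p^3}$ (via the Liu--Wang result) and check that the pieces cancel. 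Your choice $f(k)=(k-1)^{m-3}$ with summation over $0\le k\le p-1$ effectively pushes the boundary one step further, to indices $p-1$ and $p$, where the $p^3$ factor is explicit and no auxiliary congruences for individual Ap\'ery numbers are needed. The trade-off is that the paper's version yields a closed formula valid for all $n$ (their equation~(2.1)), which they reuse later in the proof of Theorem~1.2; your version is tailored to the prime case but gives the lemma itself for free.
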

\begin{proof}
Define
$$f(k):=(-1)^kA_k\quad(k\in\mathbb{N}).$$
By Zeilberger's algorithm \cite{PWZ}, we obtain the recurrence:
$$(k+1)^3f(k)+(2k+3)(17k^2+51k+39)f(k+1)+(k+2)^3f(k+2)=0\quad (k\in\mathbb{N}).$$
Multiplying both sides of the above formula by $k^{m-3}$, we get
$$(k+1)^3k^{m-3}f(k)+(2k+3)(17k^2+51k+39)k^{m-3}f(k+1)+(k+2)^3k^{m-3}f(k+2)=0.$$
Then summing both sides from $k=0$ to $n-2$ ($n\geq2$), and then rearranging the summation term, we arrive at
\begin{align}\label{equlem}
\sum_{k=0}^nP_m(k)(-1)^kA_k=&(-1)^{n-1}(n-1)^{m-3}n^3A_{n-1}+(-1)^nn^{m-3}(n+1)^3A_n\notag\\
&+(-1)^n(34n^3+51n^2+27n+5)(n-1)^{m-3}A_n.
\end{align}
Thus \eqref{equlem} with $n=p-1$ yields that
\begin{align}\label{equlem2}
\sum_{k=0}^{p-1}P_m(k)(-1)^kA_k=&-(p-1)^3(p-2)^{m-3}A_{p-2}+p^3(p-1)^{m-3}A_{p-1}\notag\\
&+(34p^3-51p^2+27p-5)(p-2)^{m-3}A_{p-1}.
\end{align}
Recall a classical result of Wolstenholme \cite{Wol}:
$$H_{p-1}=\sum_{k=1}^{p-1}\frac{1}{k}\equiv0\pmod{p^2}\quad\mbox{and}\quad H_{p-1}^{(2)}=\sum_{k=1}^{p-1}\frac{1}{k^2}\equiv0\pmod{p}.$$
Thus
\begin{align*}
A_{p-2}&=\sum_{k=0}^{p-2}\binom{p-2}{k}^2\binom{p-2+k}{k}^2\\
&=1+(p-2)^2(p-1)^2+\sum_{k=2}^{p-2}\binom{p-2}{k}^2\binom{p-2+k}{k}^2\\
&\equiv5-12p+13p^2+\sum_{k=2}^{p-2}\frac{(k+1)^2p^2}{(k-1)^2k^2}\pmod{p^3}\\
&\equiv5-12p+13p^2+p^2\sum_{k=2}^{p-2}\left(\frac{4}{(k-1)^2}-\frac{4}{k-1}+\frac{1}{k^2}+\frac{4}{k}\right)
\pmod{p^3}\\
&\equiv5-12p+13p^2+p^2\left(5H_{p-1}^{(2)}-\frac{4}{(p-2)^2}-\frac{5}{(p-1)^2}+\frac{4}{p-2}-5\right)
\pmod{p^3}\\
&\equiv5-12p\pmod{p^3}.
\end{align*}
Combining this with \eqref{equliu} and \eqref{equlem2}, we get that
\begin{align*}
\sum_{k=0}^{p-1}P_m(k)(-1)^kA_k
\equiv&-(p-1)^3(5-12p)(p-2)^{m-3}+(-51p^2+27p-5)(p-2)^{m-3}\ (\mo\ p^3)\\
\equiv&\,0\pmod{p^3}.
\end{align*}
Thus we obtain the desired result.
\end{proof}

Inspired by L. Ghidelli's nice answer \cite{Ghi} in the MathOverflow, we deduce the following result.

\begin{lemma}\label{lem2}
For any positive odd number $m$, we can write
$$(2x+1)^m=\sum_{3\le k\le m}a_kP_{k}(x)+a_1P_1(x),$$
where $a_1,a_3,\ldots,a_m\in\mathbb{Q}.$
\end{lemma}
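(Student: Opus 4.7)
The plan is to prove the lemma by strong induction on the odd integer $m$. The base case $m=1$ is immediate since $P_1(x)=2x+1$. For the inductive step with $m\ge 3$ odd, I would first note that each $P_k(x)$ with $k\ge 3$ has degree exactly $k$ with leading coefficient $1+34+1=36$, obtained from the three summands in its definition. Hence the polynomials $P_1,P_3,P_4,\dots,P_m$ are linearly independent in the $(m+1)$-dimensional space $V_m$ of polynomials of degree at most $m$, and they span a subspace $W_m\subseteq V_m$ of codimension $2$. The goal is to show $(2x+1)^m\in W_m$.

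The first stage is routine Gaussian elimination: I subtract $a_m P_m$ with $a_m=2^m/36$ from $(2x+1)^m$ to kill the $x^m$ term, and then successively choose $a_{m-1},a_{m-2},\dots,a_3\in\mathbb Q$ so that each subtraction eliminates the next top remaining degree. This is possible because of the triangular structure of leading coefficients. After these reductions, the residual $R(x):=(2x+1)^m-\sum_{k=3}^m a_k P_k(x)$ has degree at most $2$. Writing $R(x)=\rho_0+\rho_1 u+\rho_2 u^2$ in the variable $u=2x+1$, the choice $a_1=\rho_1$ eliminates the $\rho_1 u$ contribution via $P_1(x)=u$, leaving the final residual $\rho_0+\rho_2 u^2$. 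The lemma therefore reduces to showing $\rho_0=\rho_2=0$.

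To handle the remaining step cleanly, I would introduce the operator $T\colon V_{m-3}\to V_m$ defined by
$T(g)(x)=(x+1)^3 g(x)+(34x^3+51x^2+27x+5)\,g(x-1)+x^3 g(x-2)$,
and observe that $T(x^{k-3})=P_k(x)$, so $T(V_{m-3})=\mathrm{span}(P_3,\dots,P_m)$, which has codimension $3$ in $V_m$. Then $(2x+1)^m\in W_m$ is equivalent to $(2x+1)^m-a_1(2x+1)\in\mathrm{image}(T)$ for some $a_1\in\mathbb Q$, i.e.\ three linear conditions on $(2x+1)^m-a_1(2x+1)$: one determines $a_1$, and the remaining two must vanish on $(2x+1)^m$ itself. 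The key algebraic input is the factorization $34x^3+51x^2+27x+5=(2x+1)(17x^2+17x+5)$, which makes the middle summand of every $P_k$ divisible by $u$; this drastically simplifies the analysis of the $u^0$ and $u^2$ coefficients of $P_k(x)$ and exposes the relevant parity structure.

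The main obstacle is verifying that these two remaining functionals vanish on $(2x+1)^m$ for odd $m$. I expect this to rest on a combinatorial or generating-function identity in the spirit of Ghidelli's MathOverflow answer cited by the authors, exploiting both the oddness of $(2x+1)^m$ under the involution $u\mapsto -u$ and the specific shift structure inherent in the three summands of $P_k(x)$ (with shifts $x$, $x-1$, $x-2$). It is this parity-plus-shift interaction, rather than naive dimension counting, that is expected to provide the automatic cancellation of both $\rho_0$ and $\rho_2$ for odd $m$.
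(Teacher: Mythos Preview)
Your setup is essentially the same as the paper's: Euclidean division down to a remainder $r_2(x)$ of degree $\le 2$, and the task is to show that $r_2((z-1)/2)$ is odd in $z=2x+1$ (equivalently, your $\rho_0=\rho_2=0$). But you stop precisely at the point that is the whole content of the lemma: you write that you ``expect'' the cancellation to follow from a parity-plus-shift argument, without carrying it out. That is the gap.

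Here is the missing idea, which is what the paper actually does. After the substitution $z=2x+1$, write $g(z)=\sum_{k=3}^m a_k\bigl(\tfrac{z-3}{2}\bigr)^{k-3}$ and re-expand it in the monomial basis as $g(z)=\sum_{k=3}^m c_k z^{k-3}$. Then
\[
\sum_{k=3}^m a_k P_k\!\left(\tfrac{z-1}{2}\right)=\sum_{k=3}^m c_k G_k(z),\qquad
G_k(z)=\tfrac{(z+1)^3}{8}(z+2)^{k-3}+\tfrac{17z^3+3z}{4}z^{k-3}+\tfrac{(z-1)^3}{8}(z-2)^{k-3}.
\]
The crucial (and easy) verification is $G_k(-z)=(-1)^k G_k(z)$: the first and third summands swap under $z\mapsto -z$, while the middle summand is odd in $z$ thanks to the very factorization $34x^3+51x^2+27x+5=(2x+1)(17x^2+17x+5)$ you noticed. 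Since $z^m=\sum_k c_k G_k(z)+r_2((z-1)/2)$ and $G_k$ has degree exactly $k$, comparing even-degree top terms forces $c_k=0$ for every even $k$ (take the largest even $k$ with $c_k\neq 0$ and look at the coefficient of $z^k$). Hence $\sum c_k G_k(z)$ is odd in $z$, and so is $r_2((z-1)/2)$.

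Your operator $T$ and the codimension bookkeeping are correct but do not by themselves produce this cancellation; the point is the change of basis from $\{(z-3)^{k-3}\}$ to $\{z^{k-3}\}$ that converts the $P_k$ into functions $G_k$ of definite parity, after which a simple degree argument (not a combinatorial identity) finishes the job.
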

\begin{proof}
Clearly, the case $m=1$ holds. So we now assume $m\geq3$. Note that $P_{n}(x)\ (n\geq3)$ is a polynomial of degree $n$ with leading coefficient 36. According to the Euclidean division, there is a unique coefficient $a_{m}\in \mathbb{Q}$ and a unique polynomial $r_{m-1}(x)\in \mathbb{Q}[x]$ of degree not exceeding $m-1$ such that $(2x+1)^m=a_{m}P_{m}(x)+r_{m-1}(x).$ Continue to use the Euclidean division, we get a unique coefficient $a_{m-1}\in \mathbb{Q}$ and a unique polynomial $r_{m-2}(x)\in \mathbb{Q}[x]$ of degree not exceeding $m-2$ such that $r_{m-1}(x)=a_{m-1}P_{m-1}(x)+r_{m-2}(x).$ Repeating the process, we finally get $a_m,a_{m-1},\ldots,a_{3}\in\mathbb{Q}$ and $r_{m-1}(x),r_{m-2}(x),\ldots,r_{2}(x)\in\mathbb{Q}(x)$ such that
$$(2x+1)^m=\sum_{k=3}^{m}a_kP_{k}(x)+r_2(x).$$
It remains to prove that $r_2(x)$ is a multiple of $2x+1$ by a coefficient
$a_{1} \in \mathbb{Q}$. Changing the variable $z=2x+1$, we obtain
$$z^m=\sum_{k=3}^{m}a_kP_{k}\left(\frac{z-1}{2}\right)+r_2\left(\frac{z-1}{2}\right).$$
Noting that $r_2(x)$ is a polynomial of degree not exceeding 2, so it suffices to show that $r_2(\frac{z-1}{2})$ is an odd function with regard to the variable $z$.

By the definition of $P_k(x)$, we have
\begin{equation}\label{equpk}\begin{aligned}
\sum_{k=3}^{m}a_kP_{k}\left(\frac{z-1}{2}\right)
=\ &\sum_{k=3}^{m}a_k\frac{(z+1)^3}{8}\left(\frac{z-1}{2}\right)^{k-3}
\\\ &+\sum_{k=3}^{m}a_k\left(\frac{17z^3}{4}+\frac{3z}{4}\right)\left(\frac{z-3}{2}\right)^{k-3}
+\sum_{k=3}^{m}a_k\frac{(z-1)^3}{8}\left(\frac{z-5}{2}\right)^{k-3}.
\end{aligned}
\end{equation}
Define
\begin{equation}\label{equgz}
g(z):=\sum_{k=3}^{m}a_k\left(\frac{z-3}{2}\right)^{k-3}.
\end{equation}
Since $g(z)\in\mathbb{Q}(z)$ and deg $g(z)\leq m-3$, we have $g(z)=\sum_{k=3}^{m}c_kz^{k-3}$ for some $c_3,c_4,\ldots,c_m\in\mathbb{Q}.$
Substituting \eqref{equgz} into \eqref{equpk}, we have
\begin{align*}
\sum_{k=3}^{m}a_kP_{k}\left(\frac{z-1}{2}\right)=&\frac{(z+1)^3}{8}g(z+2)+\left(\frac{17z^3}{4}+\frac{3z}{4}\right)g(z)+\frac{(z-1)^3}{8}g(z-2)\\
=&\sum_{k=3}^{m}c_k\left(\frac{(z+1)^3}{8}(z+2)^{k-3}+\left(\frac{17z^3}{4}+\frac{3z}{4}\right)z^{k-3}+\frac{(z-1)^3}{8}(z-2)^{k-3}\right).
\end{align*}
For any integer $3\leq k\leq m$, define
$$G_k(z):=\frac{(z+1)^3}{8}(z+2)^{k-3}+\left(\frac{17z^3}{4}+\frac{3z}{4}\right)z^{k-3}+\frac{(z-1)^3}{8}(z-2)^{k-3}.$$
Thus
$$\sum_{k=3}^{m}a_kP_{k}\left(\frac{z-1}{2}\right)=\sum_{k=3}^{m}c_kG_k(z),$$
and hence
\begin{align}\label{equlemproof}
z^m=\sum_{k=3}^{m}c_kG_k(z)+r_2\left(\frac{z-1}{2}\right).
\end{align}
Note that $G_k(z)$ is a polynomial of degree $k$ and it satisfies the equation
$$G_k(-z)=(-1)^kG_k(z).$$
 Thus $G_k(z)$ is either an odd function (when $k$ is odd) or an even function (when $k$ is even). We claim that $c_k=0$ for all even $3\leq k\leq m$. If not, we take the biggest even number $3\leq t\leq m$ such that $c_t\neq0.$ Consider the coefficient of $z^t$ on both sides of \eqref{equlemproof}. Since $m$ is odd, the coefficient of $z^t$ on the left-hand side is 0.  Obviously, $r_2\left(\frac{z-1}{2}\right)$ has no $z^t$ term since its degree is not exceeding 2. As we know, a polynomial function is an odd function if and only if it has no even terms. Thus $c_kG_k(z)$ has no $z^t$ term when $k$ is odd. In addition, $c_kG_k(z)$ has no $z^t$ term when $k<t$ is even. To sum up, the coefficient of $z^t$ on the right-hand size is $c_t\neq0$. That's a contradiction.
Therefore $c_k=0$ for all even $3\leq k\leq m$. That is to say, $\sum_{k=3}^{m}c_kG_k(z)$ is an odd function. Since 
the function $z^m$ is odd, we obtain that $r_2\left(\frac{z-1}{2}\right)=z^m-\sum_{k=3}^{m}c_kG_k(z)$ is also an odd function. The proof of Lemma \ref{lem2} is now complete.
\end{proof}

\medskip
\noindent{\it Proof of Theorem \ref{theorem1}}.
In light of \eqref{equguo}, (1.4) holds for $m=1$ with $c_1=1$.
We now assume that $m\geq3$ is an odd integer and $p>3$ is a prime. By means of Lemma \ref{lem2}, we can write
\begin{equation}\label{equproof}
(2x+1)^m=\sum_{k=3}^{m}a_kP_{k}(x)+a_1P_1(x),
\end{equation}
where $a_1$ depends only on $m$. For the sake of clarity, we denote $a_1$ by $c_m$.
Changing the variable $y=2x$, we arrive at
$$(y+1)^m=\sum_{k=3}^{m}a_kP_{k}\left(\frac{y}{2}\right)+c_m(y+1).$$
Define $Q_k(y):=2^{k-2}P_k\left(\frac{y}{2}\right)$ for $3\leq k\leq m$. By some simple calculations, we find that the coefficient of $x^{k-1}$ in $P_k(x)$ is even and the leading coefficient is 36. Therefore $Q_k(y)\in \mathbb{Z}[y]$ is of degree $k$ with leading coefficient 9. Now we have
\begin{align}\label{equlast}
(y+1)^m=\sum_{k=3}^{m}b_kQ_{k}(y)+c_m(y+1),
\end{align}
where $b_k=a_k/2^{k-2}$ for $3\leq k\leq m$. Considering the coefficient of $y^m$ on both sides of \eqref{equlast}, we get that $b_m=1/9$. So $b_m$ is a $p$-adic integer and 3 is the only prime factor of its denominator. Similarly, we can get by induction that $b_{m-1},b_{m-2},\ldots,b_3,a_1=c_m$ are all $p$-adic integers and their denominators are powers of three. Thus $a_k=2^{k-2}b_k$ are all $p$-adic integers for $3\leq k\leq m$. This, together with \eqref{equguo}, \eqref{equproof} and Lemma \ref{lem}, gives that
\begin{align*}
\sum_{k=0}^{p-1}(2k+1)^{m}(-1)^kA_k=&\ \sum_{k=0}^{p-1}\left(\sum_{j=3}^{m}a_jP_j(k)+c_mP_1(k)\right)(-1)^kA_k\\
=&\ \sum_{j=3}^{m}a_j\sum_{k=0}^{p-1}P_j(k)(-1)^kA_k+c_m\sum_{k=0}^{p-1}(2k+1)(-1)^kA_k\\
\equiv&\ c_mp\left(\frac{p}{3}\right)\pmod{p^3}.
\end{align*}
Note that $Q_k(0)=2^{k-2}P_k(0)$ is even for $3\leq k\leq m$. Set $y=0$ in \eqref{equlast}, and then we obtain $c_m\equiv1\pmod{2}$. Therefore, we get that $c_m$ is a $p$-adic integer with denominator a power of three and numerator an odd integer.
As a result, we complete the proof of Theorem \ref{theorem1}. \qed

\section{Proof of Theorem \ref{theorem2}}
\setcounter{lemma}{0}
\setcounter{theorem}{0}
\setcounter{corollary}{0}
\setcounter{equation}{0}
\setcounter{conjecture}{0}

\begin{lemma}\label{lem3}
For any $n\in\mathbb{Z}^{+}$, we have
$$\sum_{k=1}^{n}(-1)^{n-k}(9k^2+10k+3)k^2A_k>0.$$
\end{lemma}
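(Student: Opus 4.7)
My plan is to prove the positivity by a clean pairing argument. Setting $a_k := (9k^2+10k+3)k^2 A_k$, the sum in the lemma becomes $S_n = \sum_{k=1}^n (-1)^{n-k} a_k$. Once I verify that $(a_k)_{k \geq 1}$ is strictly increasing, the conclusion follows at once by grouping consecutive terms from the top,
$$S_n = (a_n - a_{n-1}) + (a_{n-2} - a_{n-3}) + \cdots ,$$
which exhausts the sum when $n$ is even and leaves the unpaired leftover $+a_1$ when $n$ is odd. Each paired difference is strictly positive by monotonicity, and the leftover is $a_1 = 22\cdot 5 > 0$, so $S_n > 0$.

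The real work is the strict monotonicity of $a_k = b_k A_k$ with $b_k := (9k^2+10k+3)k^2 = 9k^4+10k^3+3k^2$. The polynomial factor $b_k$ is plainly strictly increasing on $k \geq 1$ (its derivative $36k^3+30k^2+6k$ is positive there). That $A_k$ is strictly increasing on $k \geq 0$ I would read off directly from the combinatorial definition $A_k = \sum_{j=0}^k \binom{k}{j}^2 \binom{k+j}{j}^2$: term by term, $\binom{k+1}{j} \geq \binom{k}{j}$ and $\binom{k+1+j}{j} \geq \binom{k+j}{j}$ (both strict for $j \geq 1$), so the first $k+1$ summands of $A_{k+1}$ dominate those of $A_k$, and in addition $A_{k+1}$ picks up the strictly positive $j = k+1$ term $\binom{2k+2}{k+1}^2$. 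Since $b_k$ and $A_k$ are both positive for $k \geq 1$, their product $a_k$ is strictly increasing.

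I do not expect any serious obstacle; the argument is entirely elementary once these two monotonicity facts are in hand. The one caveat is that pairing yields only the inequality $S_n > 0$ asserted in the lemma, and does not deliver the sharper assertion in Theorem \ref{theorem2} that $S_n / (n(n+1))$ is a positive \emph{odd} integer. For that, one would presumably need a closed-form identity for $S_n$, obtainable in principle via a Zeilberger certificate applied to the Ap\'ery recurrence, in the spirit of Lemma \ref{lem}.
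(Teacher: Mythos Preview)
Your proof is correct and is essentially the same as the paper's: both rest on the strict monotonicity of the terms $b_kA_k$, the paper phrasing this as $a_{n+1}-a_{n-1}=b_{n+1}A_{n+1}-b_nA_n>0$ (with base cases $a_1,a_2>0$) while you unfold the same telescoping into an explicit pairing. Your justification of the monotonicity of $A_k$ via the termwise inequality is in fact more explicit than what the paper writes.
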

\begin{proof}
Let $a_n$ denote the left-hand side of the above inequality. Then $a_1=110$, $a_2=17118$, and
$$a_{n+1}-a_{n-1}=(9n^2+28n+22)(n+1)^2 A_{n+1}-(9n^2-8n+2)(n-1)^2A_{n-1}>0$$
for all $n\geq2.$ So $a_n>0$ for all $n\in\mathbb{Z}^{+}.$
\end{proof}
\begin{lemma}\label{lem4}
Let $n\in\mathbb{Z}^{+}$. Then
$$\sum_{m=0}^{n-1}\binom{2m}{m}\sum_{k=0}^{m}\binom{m}{k}\binom{m+k}{k}\binom{n-1}{m+k}\binom{n+m+k}{m+k}\equiv(-1)^{n-1}n\pmod{3}.$$
\end{lemma}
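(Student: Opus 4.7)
The plan is to collapse the double sum to a single-variable sum whose value mod $3$ is determined by Fermat's little theorem.

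First I would swap the order of summation. The elementary factorial identity $\binom{2m}{m}\binom{m}{k}\binom{m+k}{k}=\binom{2m}{m+k}\binom{m+k}{k}^2$ regroups the summand, and substituting $j=m+k$ (extending the range of $m$ to $0\le m\le j$, since $\binom{2m}{j}=0$ whenever $2m<j$) transforms the left-hand side into
\begin{equation*}
\sum_{j=0}^{n-1}\binom{n-1}{j}\binom{n+j}{j}\,I(j),\qquad I(j):=\sum_{m=0}^{j}\binom{2m}{j}\binom{j}{m}^{2}.
\end{equation*}

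The crucial identification would be $I(j)=\sum_{k=0}^{j}\binom{j}{k}^{3}$, the $j$-th Franel number. I would prove this by Vandermonde manipulation: first expand $\binom{2m}{j}=\sum_{s}\binom{m}{s}\binom{m}{j-s}$; then apply the absorption identities $\binom{j}{m}\binom{m}{s}=\binom{j}{s}\binom{j-s}{m-s}$ and $\binom{j}{m}\binom{m}{j-s}=\binom{j}{s}\binom{s}{m+s-j}$ to the two factors of $\binom{j}{m}^{2}$; finally recognize that the residual sum $\sum_{m}\binom{j-s}{m-s}\binom{s}{m+s-j}$ collapses to $\binom{j}{s}$ by Chu--Vandermonde, leaving $\sum_{s}\binom{j}{s}^{3}$.

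With this identification in hand the mod-$3$ statement is nearly automatic. By Fermat's little theorem $a^{3}\equiv a\pmod 3$ applied to each term, $I(j)\equiv\sum_{k}\binom{j}{k}=2^{j}\equiv(-1)^{j}\pmod 3$. It remains to evaluate $\sum_{j=0}^{n-1}(-1)^{j}\binom{n-1}{j}\binom{n+j}{j}$ in closed form. Using upper negation $(-1)^{j}\binom{n+j}{j}=\binom{-n-1}{j}$ together with the Chu--Vandermonde convolution $\sum_{j}\binom{n-1}{n-1-j}\binom{-n-1}{j}=\binom{-2}{n-1}$, the sum equals $(-1)^{n-1}n$, which gives the claim.

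The main obstacle I anticipate is the reduction $I(j)=\sum_{k}\binom{j}{k}^{3}$: selecting the correct chain of absorption identities so that the triple sum telescopes to Franel numbers requires some care. Once that identity is secured, both the Fermat congruence for $f_j$ and the final alternating Vandermonde evaluation are one-line standard binomial algebra.
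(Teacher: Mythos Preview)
Your argument is correct and is genuinely different from the paper's approach. The paper never rewrites the double sum; instead it works directly with $S_n$ and applies Lucas' theorem to each binomial factor, painstakingly reducing $S_{3n},S_{3n+1},S_{3n+2}$ modulo $3$ to expressions in $S_n$ and an auxiliary sequence $B_n$, and then finishes by an induction on the base-$3$ digits of $n$. Your route is far more structural: the change of variable $j=m+k$ together with the factorial regrouping leads to $\sum_{j}\binom{n-1}{j}\binom{n+j}{j}I(j)$, and the identification $I(j)=\sum_{k}\binom{j}{k}^{3}$ is exactly Strehl's identity for the Franel numbers, after which Fermat's little theorem gives $I(j)\equiv(-1)^{j}\pmod 3$ and the remaining alternating Vandermonde sum $\sum_j(-1)^j\binom{n-1}{j}\binom{n+j}{j}=\binom{-2}{n-1}=(-1)^{n-1}n$ is an \emph{exact} closed form. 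What your approach buys is brevity and a clean conceptual explanation (the congruence is the mod-$3$ shadow of an exact identity), while the paper's Lucas-based method is more mechanical but perhaps adapts more readily if one wanted congruences to other small primes where no convenient Franel-type reduction is available. Your own caution about the absorption chain is well placed but the computation you outline does go through: the Vandermonde expansion of $\binom{2m}{j}$, the two trinomial revisions $\binom{j}{m}\binom{m}{s}=\binom{j}{s}\binom{j-s}{m-s}$ and $\binom{j}{m}\binom{m}{j-s}=\binom{j}{s}\binom{s}{m+s-j}$, and the final Chu--Vandermonde collapse $\sum_m\binom{j-s}{m-s}\binom{s}{m+s-j}=\binom{j}{s}$ indeed yield $\sum_s\binom{j}{s}^3$.
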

\begin{proof}
Let $S_n$ denote the left-hand side of the above congruence. It suffices to prove $S_{3n}\equiv0\pmod{3}$ and $S_{3n+1}\equiv S_{3n+2}\equiv(-1)^n\pmod{3}$.

For any $m\in\mathbb{N}$ and $k\in\mathbb{Z}^+$, if $m/k\equiv0\pmod{3}$, then we get
$$\binom{m}{k}=\frac{m}{k}\binom{m-1}{k-1}\equiv0\pmod{3}.$$
By this little trick and direct calculations, we arrive at
\begin{align*}
S_{3n}=&\sum_{m=0}^{3n-1}\binom{2m}{m}\sum_{k=0}^{m}\binom{m}{k}\binom{m+k}{k}\binom{3n-1}{m+k}\binom{3n+m+k}{m+k}\\
\equiv&\sum_{m=0}^{n-1}\binom{6m}{3m}\sum_{k=0}^{3m}\binom{3m}{k}\binom{3m+k}{k}\binom{3n-1}{3m+k}\binom{3n+3m+k}{3m+k}
+\sum_{m=0}^{n-1}\binom{6m+2}{3m+1}\sigma_{m}\ (\mo\ 3),
\end{align*}
where
\begin{align*}\sigma_m=\ &
\sum_{k=0}^{3m+1}\binom{3m+1}{k}\binom{3m+k+1}{k}\binom{3n-1}{3m+k+1}\binom{3n+3m+k+1}{3m+k+1}\\
\equiv&\ \sum_{k=0}^{m}\binom{3m+1}{3k}\binom{3m+3k+1}{3k}\binom{3n-1}{3m+3k+1}\binom{3n+3m+3k+1}{3m+3k+1}
\\&\ +\sum_{k=0}^{m}\binom{3m+1}{3k+1}\binom{3m+3k+2}{3k+1}\binom{3n-1}{3m+3k+2}\binom{3n+3m+3k+2}{3m+3k+2}
\pmod{3}.
\end{align*}
By the well-known Lucas congruence \cite{Lucas}, we have
$$\binom{3a+s}{3b+t}\equiv\binom{a}{b}\binom{s}{t}\pmod3$$
for any $a,b\in\mathbb{N}$ and $s,t\in\{0,1,2\}$.
Applying this congruence, we deduce from the above that
\begin{align*}
S_{3n}\equiv&\sum_{m=0}^{n-1}\binom{2m}{m}\sum_{k=0}^{m}\binom{m}{k}\binom{m+k}{k}\binom{n-1}{m+k}\binom{2}{0}\binom{n+m+k}{m+k}\\
&+\sum_{m=0}^{n-1}\binom{2m}{m}\binom{2}{1}\sum_{k=0}^{m}\binom{m}{k}\binom{1}{0}\binom{m+k}{k}\binom{1}{0}\binom{n-1}{m+k}\binom{2}{1}\binom{n+m+k}{m+k}\binom{1}{1}\\
&+\sum_{m=0}^{n-1}\binom{2m}{m}\binom{2}{1}\sum_{k=0}^{m}\binom{m}{k}\binom{1}{1}\binom{m+k}{k}\binom{2}{1}\binom{n-1}{m+k}\binom{2}{2}\binom{n+m+k}{m+k}\binom{2}{2}
\ (\mo\ 3)\\
\eq&S_n+4S_n+4S_n=9S_n\equiv0\pmod3.
\end{align*}
Similarly, dealing with $S_{3n+1}$ modulo $3$,  we obtain
$$S_{3n+1}\equiv\sum_{m=0}^{n}\binom{2m}{m}\sum_{k=0}^{m}\binom{m}{k}\binom{m+k}{k}\binom{n}{m+k}\binom{n+m+k}{m+k}\pmod{3}.$$
Define
$$B_n:=\sum_{m=0}^{n}\binom{2m}{m}\sum_{k=0}^{m}\binom{m}{k}\binom{m+k}{k}\binom{n}{m+k}\binom{n+m+k}{m+k}.$$
 Using the same method, we obtain $B_{3n}\equiv B_{n}\pmod{3}$, $B_{3n+1}\equiv-B_{n}\pmod{3}$ and $B_{3n+2}\equiv B_{n}\pmod{3}$. Note that $B_{1}=5\equiv-1\pmod{3}$, $B_{2}=73\equiv1\pmod{3}$ and $B_{3}=1445\equiv-1\pmod3$. By induction, we can get $B_{n}\equiv(-1)^n\pmod{3}$ easily. That is to say, $S_{3n+1}\equiv(-1)^n\pmod{3}$. In a similar way, we can prove $S_{3n+2}\equiv(-1)^n\pmod3.$
 This completes the proof of Lemma \ref{lem4}.
\end{proof}

Recall that Gessel \cite{Gessel} investigated  some congruence properties of Ap\'ery numbers. Namely, he proved that $A_n\equiv (-1)^n\pmod3$, $A_{2n}\equiv1\pmod 8$ and $A_{2n+1}\equiv5\pmod 8$ for any $n\in\mathbb{N}$. Thus we immediately obtain $A_n+A_{n-1}\equiv0\pmod{3}$ and $A_{n}-A_{n-1}\equiv4\pmod8$ for any $n\in\mathbb{Z}^{+}$. Our next lemma gives a further refinement of these results.
\begin{lemma}\label{lem23}For any $n\in\mathbb{Z}^{+}$ with $3\nmid n$, we have $A_n+A_{n-1}\equiv (-1)^n3n\pmod{9}.$ Similarly, for any $n\in\mathbb{Z}^{+}$ with $2\nmid n$, we have $A_{n}-A_{n-1}\equiv4(-1)^{(n-1)/2}\pmod{16}$.
\end{lemma}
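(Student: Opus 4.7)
The plan is to prove the two congruences separately by induction on $n$, using only the Ap\'ery recurrence
$$(n+1)^3A_{n+1}=(2n+1)(17n^2+17n+5)A_n-n^3A_{n-1}$$
and Gessel's congruences $A_n\equiv(-1)^n\pmod 3$, $A_{2n}\equiv1\pmod 8$, $A_{2n+1}\equiv5\pmod 8$ recalled just before the lemma. Lemma \ref{lem4} will not be needed.

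\textbf{Mod $9$ part.} Setting $T_n:=A_n+A_{n-1}$, I add $(n+1)^3A_n$ to both sides of the Ap\'ery recurrence and replace $A_{n-1}$ by $T_n-A_n$, obtaining
$$(n+1)^3T_{n+1}=6(6n^3+9n^2+5n+1)A_n-n^3T_n,$$
which modulo $9$ (and using $3A_n\equiv3(-1)^n\pmod 9$) collapses to
$$(n+1)^3T_{n+1}\equiv3(n+2)(-1)^n-n^3T_n\pmod 9.$$
From the base case $T_1=6$ I induct on $n\ge1$ with $3\nmid n+1$, splitting on $n\bmod 3$. When $n\equiv0\pmod 3$, $27\mid n^3$ kills the $T_n$-term while $(n+1)^3\equiv1\pmod 9$, so $T_{n+1}\equiv3(n+2)(-1)^n\pmod 9$, which a direct check identifies with $(-1)^{n+1}3(n+1)\pmod 9$. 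When $n\equiv1\pmod 3$, $9\mid 3(n+2)$ kills the $A_n$-term, $n^3\equiv1$ and $(n+1)^3\equiv-1\pmod 9$, and $3\nmid n$ lets me invoke the inductive hypothesis $T_n\equiv(-1)^n3n\pmod 9$; combining these yields the claimed value of $T_{n+1}$. Note the induction never needs $T_k$ for $3\mid k$.

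\textbf{Mod $16$ part.} Setting $D_n:=A_n-A_{n-1}$, I subtract $(n+1)^3A_n$ from both sides of the recurrence and substitute $A_{n-1}=A_n-D_n$. The polynomial identity
$$(2n+1)(17n^2+17n+5)-(n+1)^3-n^3=4(2n+1)^3$$
then produces the clean exact recurrence
$$(n+1)^3D_{n+1}=4(2n+1)^3A_n+n^3D_n.$$
Applying this with $n=2k-1$ and then with $n=2k$ and eliminating $D_{2k}$ yields
$$(2k+1)^3D_{2k+1}-(2k-1)^3D_{2k-1}=4(4k+1)^3A_{2k}+4(4k-1)^3A_{2k-1}.$$
Since $(4k\pm1)^3\equiv\pm1\pmod 4$, the coefficients satisfy $4(4k+1)^3\equiv4$ and $4(4k-1)^3\equiv12\pmod{16}$; combined with $A_{2k}\equiv A_{2k-1}\equiv1\pmod 4$ (implied by Gessel's mod-$8$ congruences), the right-hand side reduces to $4+12\equiv0\pmod{16}$. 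An induction from $D_1=4$ then gives $(2k+1)^3D_{2k+1}\equiv4\pmod{16}$ for every $k\ge0$, and since $(2k+1)\equiv(-1)^k\pmod 4$ forces $(2k+1)^3(-1)^k\equiv1\pmod 4$, this is equivalent to the desired $D_{2k+1}\equiv4(-1)^k\pmod{16}$.

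\textbf{Main obstacle.} The one step requiring a little algebraic luck is spotting the polynomial identity $(2n+1)(17n^2+17n+5)-(n+1)^3-n^3=4(2n+1)^3$, which is precisely what makes the mod-$16$ induction collapse so cleanly; without this factorization the analysis modulo $16$ would be considerably more delicate. Beyond that, I anticipate only routine bookkeeping with residues and sign patterns.
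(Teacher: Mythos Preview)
Your proof is correct and follows essentially the same strategy as the paper: reduce the Ap\'ery three-term recurrence modulo $9$ (resp.\ $16$) and feed in Gessel's congruences $A_n\equiv(-1)^n\pmod3$ and $A_{2n}\equiv1$, $A_{2n+1}\equiv5\pmod8$. The organizational choices differ slightly. For the mod-$9$ part, the paper observes that the three coefficients in the recurrence are periodic with period $3$ modulo $9$, writes down the resulting three congruences $4A_{3k}+A_{3k+1}\equiv0$, $-A_{3k+1}-4A_{3k+2}\equiv0$, $A_{3k}-A_{3k+2}\equiv0\pmod9$, and reads off the lemma directly from the first two; you instead package $T_n=A_n+A_{n-1}$ into its own recurrence and induct. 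These are equivalent manipulations of the same information.

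For the mod-$16$ part the paper only says the argument is ``similar'' and ``a bit more difficult'' without giving details, so your proof actually supplies what the paper omits. Your polynomial identity $(2n+1)(17n^2+17n+5)-(n+1)^3-n^3=4(2n+1)^3$ is exactly the right simplification: it makes the two-step recurrence for $D_{2k+1}$ collapse to $0\pmod{16}$ once Gessel's mod-$8$ information is used, and the final passage from $(2k+1)^3D_{2k+1}\equiv4\pmod{16}$ to $D_{2k+1}\equiv4(-1)^k\pmod{16}$ via $(2k+1)\equiv(-1)^k\pmod4$ and $D_{2k+1}\equiv4\pmod8$ is clean.
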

\begin{proof}
These two results can be proved in a similar way even though the second result is a bit more difficult. Here we only prove the first result in detail. The Zeilberger algorithm gives the recurrence relation of $A_{n}$ as follows:
\begin{equation}\label{equapery}
(n+1)^3 A_{n}-(3+2n)(39+51n+17n^2)A_{n+1}+(n+2)^3A_{n+2}=0\quad (n\in\mathbb{N}).
\end{equation}
Note that $(n+1)^3$, $-(3+2n)(39+51n+17n^2)$ and $(n+2)^3$ modulo 9 are periodic with period 3. Let $k\in\mathbb{N}$. Considering the above equation modulo 9, we find that
$$\begin{cases}4A_{3k}+A_{3k+1}\equiv0\pmod9,\\
-A_{3k+1}-4A_{3k+2}\equiv0\pmod9,\\
A_{3k}-A_{3k+2}\equiv0\pmod{9}.\end{cases}$$
Thus using Gessel's result,  we obtain
$A_{3k+1}+A_{3k}\equiv-3A_{3k}\equiv-3(-1)^{3k}=(-1)^{3k+1}3(3k+1)\pmod9$, and $A_{3k+2}+A_{3k+1}\equiv-3A_{3k+2}\equiv-3(-1)^{3k+2}=(-1)^{3k+2}3(3k+2)\pmod{9}.$ These conclude the proof.

\end{proof}
\medskip

\noindent{\it Proof of Theorem \ref{theorem2}}. Let $n\in\mathbb{Z}^{+}$. Denote $\sum_{k=1}^{n}(-1)^{n-k}(9k^2+10k+3)k^2A_k$ by $T_n$. First of all, we show that $T_n/n$ is an integer, which is odd if $2\mid n$.
It is routine to verify that
$$(9k^2+10k+3)k^2=\frac{1}{4}P_4(k)+\frac{11}{72}P_3(k)+\frac{1}{3}P_1(k),$$
where $P_{m}(x)$ is defined at the beginning of Section 2. Hence \eqref{equlem} yields
\begin{equation}\label{equtn}
T_n=\frac{(-1)^n}{3}\sum_{k=0}^{n-1}(-1)^k(2k+1)A_k+\frac{1}{72}n^2(630n^2+745n+216)A_n-\frac{1}{72}n^3(18n-7)A_{n-1}.
\end{equation}
Recall that V.J.W. Guo and J. Zeng \cite{GuoZeng} obtained the following amazing combinatorial identity:
\begin{equation}\label{GZ-3.3}
\frac{1}{n}\sum_{k=0}^{n-1}(-1)^k(2 k+1)A_k=(-1)^{n-1}\sum_{m=0}^{n-1}\binom{2m}{m}\sum_{k=0}^{m}\binom{m}{k}\binom{m+k}{k}\binom{n-1}{m+k}\binom{n+m+k}{m+k}.
\end{equation}
With the help of the above identity, we obtain
\begin{align*}
\frac{T_n}{n}=&-\frac{1}{3}\sum_{m=0}^{n-1}\binom{2m}{m}\sum_{k=0}^{m}\binom{m}{k}\binom{m+k}{k}\binom{n-1}{m+k}\binom{n+m+k}{m+k}\\
&+\frac{1}{72}n(630n^2+745n+216)A_{n}-\frac{1}{72}n^2(18n-7)A_{n-1}.
\end{align*}
In order to prove $n\mid T_n$, we need to show that the right-hand side of the above equation is a 3-adic integer and also a 2-adic integer. Firstly, we show that $T_n/n$ is a 3-adic integer. According to Lemma \ref{lem4}, it suffices to show that
$$\frac{1}{24}n^2(630n^2+745n+216)A_{n}-\frac{1}{24}n^3(18n-7)A_{n-1}\equiv(-1)^{n-1}n\pmod{3}.$$
By simple calculations, we get
$$\frac{1}{24}n^2(630n^2+745n+216)A_{n}-\frac{1}{24}n^3(18n-7)A_{n-1}\equiv\frac{2}{3}n^2(A_{n}+A_{n-1})\pmod3.$$
Clearly, $2n^2(A_{n}+A_{n-1})/3\equiv(-1)^{n-1}n\equiv0\pmod3$ when $n\equiv0\pmod3$. When $3\nmid n$, we obtain by Lemma \ref{lem23} that
$$\frac{2}{3}n^2(A_{n}+A_{n-1})\equiv\frac{2}{3}n^2(-1)^n3n\equiv(-1)^{n-1}n^{3}\equiv(-1)^{n-1}n\pmod3,$$
where the last congruence follows from Fermat's little theorem. Secondly, we show that $T_n/n$ is a 2-adic integer. In fact, we just need to prove
$$\frac{1}{36}n(630n^2+745n+216)A_{n}-\frac{1}{36}n^2(18n-7)A_{n-1}\equiv0\pmod{2}.$$
It is easy to check that
$$
\frac{n}{36}(630n^2+745n+216)A_{n}-\frac{n^2}{36}(18n-7)A_{n-1}$$
is congruent to
$$h(n):=\frac{n^2}{4}(A_{n}-A_{n-1}) +\frac{n^3}{6}(A_{n}-A_{n-1}) +n^3A_{n-1}+2nA_{n}$$
modulo $4$.
If $2\mid n$, then $h(n)\eq0\pmod 4$ by means of Gessel's result $A_{n}-A_{n-1}\equiv4\pmod8.$ If $2\nmid n$, by Lemma \ref{lem23} and Gessel's result $A_{n}\equiv1\pmod4$, we have
$$h(n)\eq \frac{n^2}{4}\times4(-1)^{(n-1)/2}
+ \frac{n^3}{6}\times4(-1)^{(n-1)/2}+n^3+2n\eq2\pmod4.
$$
So far we have got that $n\mid T_n$. Now we consider the case $2\mid n$. Notice that $\binom{2m}{m}=2\binom{2m-1}{m-1}\equiv0\pmod2$ for any $m\in\mathbb{Z}^{+}$.
Thus
$$\sum_{m=0}^{n-1}\binom{2m}{m}\sum_{k=0}^{m}\binom{m}{k}\binom{m+k}{k}\binom{n-1}{m+k}\binom{n+m+k}{m+k}\equiv1\pmod{2},$$
and hence $T_n/n\equiv1\pmod{2}$.

We now show that $T_n/(n+1)$ is also an integer, which is odd if $2\nmid n$. In view of \eqref{equapery}, we have
$$n^3A_{n-1}=(2n+1)(17n^2+17n+5)A_{n}-(n+1)^3A_{n+1}.$$
Substituting the above expression into \eqref{equtn}, we deduce
$$T_n=(-1)^n\frac{1}{3}\sum_{k=0}^n(-1)^k(2k+1)A_k+\frac{1}{72}(n+1)^3(18n+11)A_n+\frac{1}{72}(n+1)^3(18n-7)A_{n+1}.$$
Then, using V.J.W. Guo and J. Zeng's amazing identity \eqref{GZ-3.3}, we obtain
\begin{align*}
\frac{T_n}{n+1}=&\frac{1}{3}\sum_{m=0}^{n}\binom{2m}{m}\sum_{k=0}^{m}\binom{m}{k}\binom{m+k}{k}\binom{n-1}{m+k}\binom{n+m+k}{m+k}\\
&+\frac{1}{72}(n+1)^2(18n+11)A_n+\frac{1}{72}(n+1)^2(18n-7)A_{n+1}.
\end{align*}
As a result, $(n+1)\mid T_n$ if the right-hand size of the above expression is a $3$-adic integer
and also a $2$-adic integer, which follows in a similar way as done in the case $n\mid T_n$.
Moreover, $T_n/(n+1)$ is odd if $2\nmid n$.

Since $(n,n+1)=1$, we immediately deduce $n(n+1)\mid T_n.$ When $n$ is odd, then $T_n/(n+1)$ is odd, and hence $T_n/(n(n+1))$ is odd. When $n$ is even, then both $T_n/n$ and $n+1$ are odd, and hence $T_n/(n(n+1))$ is odd. As a result, $T_n/(n(n+1))$ is odd. In view of Lemma \ref{lem3}, we conclude that $T_n/(n(n+1))$ is a positive odd integer.

Let $p$ be an odd prime. We can verify \eqref{equcor} for $p=3$ easily. So we assume $p>3$ below. With the aid of Lemma \ref{lem} and \eqref{equguo}, we obtain
\begin{align*}
\sum_{k=1}^{p-1}(-1)^{k}(9k^2+10k+3)k^2A_k
=&\ \sum_{k=0}^{p-1}(-1)^k(9k^2+10k+3)k^2A_k
\\
=&\sum_{k=0}^{p-1}\left(\frac{1}{4}P_4(k)+\frac{11}{72}P_3(k)+\frac{1}{3}P_1(k)\right)(-1)^kA_k
\\
\equiv&\frac{p}{3}\left(\frac{p}{3}\right)\pmod{p^3}.
\end{align*}
Noting that
$$A_p=\sum_{k=0}^{p}\binom{p}{k}^2\binom{p+k}{k}^2\equiv1+\binom{2p}{p}^2=1+4\binom{2p-1}{p-1}^2
\equiv5\pmod{p},$$
we obtain from the above the desired result \eqref{equcor}. Hence we complete the proof of Theorem
\ref{theorem2}. \qed

\Ack The authors would like to thank the referee for helpful comments.
\medskip

\end{document}